\documentclass[12pt,abstract]{scrartcl}
\usepackage{graphicx}
\usepackage{amsmath,amsthm}
\makeatletter
\renewcommand{\theenumii}{\@roman\c@enumii}
\makeatother

\usepackage{xcolor}
\usepackage{natbib}
\newtheorem{lemma}{Lemma}
\newtheorem{theorem}{Theorem}

\usepackage{newtxtext,newtxmath}
\begin{document}

\title{Nash equilibrium in asymmetric multi-players zero-sum game with two strategic variables and only one alien}

\author{Atsuhiro Satoh\thanks{atsatoh@hgu.jp}\\[.01cm]
Faculty of Economics, Hokkai-Gakuen University,\\[.02cm]
Toyohira-ku, Sapporo, Hokkaido, 062-8605, Japan,\\[.01cm]
\textrm{and} \\[.1cm]
Yasuhito Tanaka\thanks{yasuhito@mail.doshisha.ac.jp}\\[.01cm]
Faculty of Economics, Doshisha University,\\
Kamigyo-ku, Kyoto, 602-8580, Japan.\\}

\date{}

\maketitle
\thispagestyle{empty}

%\clearpage

\begin{abstract}
We consider a partially asymmetric multi-players zero-sum game with two strategic variables. All but one players have the same payoff functions, and one player (Player $n$) does not. Two strategic variables are $t_i$'s and $s_i$'s for each player $i$. Mainly we will show the following results. 1) The equilibrium when all players choose $t_i$'s is equivalent to the equilibrium when all but one players choose $t_i$'s and Player $n$ chooses $s_n$ as their strategic variables. 2) The equilibrium when all players choose $s_i$'s is equivalent to the equilibrium when all but one players choose $s_i$'s and Player $n$ chooses $t_n$ as their strategic variables. The equilibrium when all players choose $t_i$'s and the equilibrium when all players choose $s_i$'s are not equivalent although they are equivalent in a symmetric game in which all players have the same payoff functions.

\end{abstract}

\begin{description}
	\item[Keywords:] partially asymmetric multi-players zero-sum game, Nash equilibrium, two strategic variables
\end{description}

\begin{description}
	\item[JEL Classification:] C72
\end{description}

\section{Introduction}
We consider a multi-players zero-sum game with two strategic variables. Two strategic variables are $t_i$ and $s_i$ for each player $i$. They are related by invertible functions. The game is symmetric for all but one player in the sense that they have the same payoff functions. On the other hand, one player (Player $n$) may have a different payoff function. Thus, the game is partially asymmetric; or there is only one \emph{alien}. In Section 3 we will show the following main results. 
\begin{enumerate}
	\item The equilibrium when all players choose $t_i$'s is equivalent to the equilibrium when all but one players choose $t_i$'s and Player $n$ chooses $s_n$ as their strategic variables.
	\item The equilibrium when all players choose $s_i$'s is equivalent to the equilibrium when all but one players choose $s_i$'s and Player $n$ chooses $t_n$ as their strategic variables.
\end{enumerate}

An example of multi-players zero-sum game with two strategic variables is a relative profit maximization game in an oligopoly with differentiated goods. See Section \ref{ex}. In that section we will show;
\begin{enumerate}
	\item The equilibrium when all players choose $t_i$'s is not equivalent to the equilibrium when all but one players choose $t_i$'s and one player other than Player $n$ chooses $s_i$ as their strategic variables.
	\item The equilibrium when all players choose $t_i$'s is not equivalent to the equilibrium when all but one players choose $s_i$'s and Player $n$ chooses $t_n$ as their strategic variables.
	\item The equilibrium when all players choose $s_i$'s is not equivalent to the equilibrium when all but one players choose $s_i$'s and one player other than Player $n$ chooses $t_i$ as their strategic variables.
	\item The equilibrium when all players choose $s_i$'s is not equivalent to the equilibrium when all but one players choose $t_i$'s and Player $n$ chooses $s_n$ as their strategic variables.
	\item The equilibrium when all players choose $t_i$'s is not equivalent to the equilibrium when all players $s_i$'s.
\end{enumerate}
In these results $t_i$'s are the outputs and $s_i$'s are the prices. In a symmetric game they are all equivalent\footnote{\cite{hst}.}. In Section \ref{ex} we also show that with more than one aliens the equivalence result does not hold.

In the next section we present a model of this paper and prove a preliminary result which is a variation of Sion's minimax theorem. 

\section{The model and the  minimax theorem}

We consider a multi-players zero-sum game with two strategic variables. There are $n$ players, $n\geq 3$. Two strategic variables are $t_i$'s and $s_i$'s, $i\in \{1, \dots, n\}$. We denote $N=\{1, \dots, n\}$. The game is symmetric for Players 1, 2, $\dots$, $n-1$ in the sense that they have the same payoff functions. On the other hand, Player $n$ may have a different payoff function.

$t_i$ is chosen from $T_i$ and $s_i$ is chosen from $S_i$. $T_i$ and $S_i$ are convex and compact sets in linear topological spaces, respectively, for each $i\in \{1, \dots, n\}$. The relations of the strategic variables are represented by
\begin{equation*}
s_i=f_i(t_1, \dots, t_n),\ i\in N,
\end{equation*}
and
\begin{equation*}
t_i=g_i(s_1, \dots, s_n),\ i\in N.
\end{equation*}
$f_i(t_1, \dots, t_n)$ and $g_i(s_1, \dots, s_n)$ are continuous, invertible, one-to-one and onto functions. We assume that all $T_i$'s are identical, and all $S_i$'s are identical. Denote them by $T$ and $S$.

When only Player $n$ chooses $s_n$, then $t_n$ is determined according to
\[t_{n}=g_{n}(f_1(t_1, \dots, t_n),\dots, f_{n-1}(t_1, \dots, t_n), s_n).\]
We denote this $t_n$ by $t_n(t_1, \dots, t_{n-1}, s_n)$.

When all players choose $s_i$'s, $i\in N$, then $t_i$'s for them are determined according to
\[
\left\{
\begin{array}{l}
t_{1}=g_{1}(s_1, \dots, s_n),\\
\dots\\
t_n=g_n(s_1, \dots, s_n).
\end{array}\right.
\]
Denote these $t_i$'s by $t_i(s_1, \dots, s_n)$.

The payoff function of Player $i$ is $u_i,\ i\in N$. It is written as
\[u_i(t_1, \dots, t_n).\]
We assume 
\begin{quote}
$u_i:T_1\times \dots\times T_n\Rightarrow \mathbb{R}$ for each $i\in N$ is continuous on $T_1\times \dots \times T_n$. Thus, it is continuous on $S_1\times \dots \times S_n$ through $f_i,\ i\in N$. It is quasi-concave on $T_i$ and $S_i$ for a strategy of each other player, and quasi-convex on $T_j,\ j\neq i$ and $S_j,\ j\neq i$ for each $t_i$ and $s_i$.
\end{quote}
We do not postulate differentiability of the payoff functions.

Symmetry of the game for Players 1, 2, $\dots$, $n-1$ means that these players are interchangeable in the payoff function of each player. Since the game is a zero-sum game, the sum of the values of the payoff functions of the players is zero.  

Sion's minimax theorem (\cite{sion}, \cite{komiya}, \cite{kind}) for a continuous function is stated as follows.
\begin{lemma}
Let $X$ and $Y$ be non-void convex and compact subsets of two linear topological spaces, and let $f:X\times Y \rightarrow \mathbb{R}$ be a function that is continuous and quasi-concave in the first variable and continuous and quasi-convex in the second variable, then
\[\max_{x\in X}\min_{y\in Y}f(x,y)=\min_{y\in Y}\max_{x\in X}f(x,y).\] \label{l1}
\end{lemma}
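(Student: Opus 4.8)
The plan is to prove the two inequalities $\max_{x}\min_{y}f\le\min_{y}\max_{x}f$ and its reverse separately; since $f$ is continuous and $X,Y$ are compact, all the extrema are attained (Weierstrass), so I may work with $\max$ and $\min$ throughout. The first inequality is immediate: for every $x,y$ one has $\min_{y'}f(x,y')\le f(x,y)\le\max_{x'}f(x',y)$, hence $\min_{y'}f(x,y')\le\max_{x'}f(x',y)$ for all $x,y$; taking $\max_x$ on the left (the right-hand side being independent of $x$) and then $\min_y$ on the right yields $\max_x\min_y f(x,y)\le\min_y\max_x f(x,y)$. All the work is in the reverse inequality.

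For the reverse inequality I would show $\max_x\min_y f\ge\min_y\max_x f=:w$. Fix any real $c<w$. Then for each $y$ we have $\max_x f(x,y)\ge w>c$, so the superlevel set $C_y:=\{x\in X:f(x,y)\ge c\}$ is nonempty; by continuity of $f$ in $x$ it is closed, hence compact, and by quasi-concavity of $f$ in $x$ it is convex. If the family $\{C_y\}_{y\in Y}$ has nonempty intersection, then any $x^{*}\in\bigcap_y C_y$ satisfies $f(x^{*},y)\ge c$ for all $y$, whence $\min_y f(x^{*},y)\ge c$ and $\max_x\min_y f\ge c$; letting $c\uparrow w$ finishes. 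Because $X$ is compact, $\bigcap_y C_y\neq\emptyset$ follows from the finite intersection property, so it suffices to prove $\bigcap_{i=1}^m C_{y_i}\neq\emptyset$ for every finite $\{y_1,\dots,y_m\}\subseteq Y$.

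The heart of the argument is the two-point case $C_{y_1}\cap C_{y_2}\neq\emptyset$, where the interaction of the two convexity hypotheses is exploited. Parametrize the segment $y(\lambda)=(1-\lambda)y_1+\lambda y_2\in Y$ for $\lambda\in[0,1]$ and set $C_\lambda:=\{x:f(x,y(\lambda))\ge c\}$, with $C_0=C_{y_1}$ and $C_1=C_{y_2}$; each $C_\lambda$ is nonempty (since $c<w$ and $y(\lambda)\in Y$), closed and convex, hence connected. Quasi-convexity of $f$ in $y$ gives $f(x,y(\lambda))\le\max(f(x,y_1),f(x,y_2))$, so $x\notin C_{y_1}\cup C_{y_2}$ forces $x\notin C_\lambda$; thus $C_\lambda\subseteq C_{y_1}\cup C_{y_2}$. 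Suppose, for contradiction, that $C_{y_1}\cap C_{y_2}=\emptyset$. Then $C_{y_1},C_{y_2}$ are disjoint compact sets, each connected $C_\lambda$ lies in their union and hence entirely in one of them, and the sets $L=\{\lambda:C_\lambda\subseteq C_{y_1}\}$ and $R=\{\lambda:C_\lambda\subseteq C_{y_2}\}$ partition $[0,1]$ with $0\in L$, $1\in R$. The key point is that both are open: $C_\lambda\subseteq C_{y_2}$ means $\max_{x\in C_{y_1}}f(x,y(\lambda))<c$, and since $f$ is \emph{continuous} and $C_{y_1}$ is a fixed compact set, the map $\lambda\mapsto\max_{x\in C_{y_1}}f(x,y(\lambda))$ is continuous, so the strict inequality persists on a neighbourhood of $\lambda$; the symmetric argument shows $L$ is open. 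A partition of the connected set $[0,1]$ into two nonempty open sets is impossible, so $C_{y_1}\cap C_{y_2}\neq\emptyset$.

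Finally I would pass from the two-point case to an arbitrary finite family $\{y_1,\dots,y_m\}$ by induction on $m$, using the convexity of $Y$ to form interpolating segments joining $y_m$ to points of $\mathrm{conv}\{y_1,\dots,y_{m-1}\}$ and invoking the inductive hypothesis on the $(m-1)$-point subfamily, so that the same connectedness/open-partition argument applies on each such segment. The main obstacle I anticipate is precisely this propagation from two points to many, since it is where one must combine quasi-concavity in $x$, quasi-convexity in $y$, and convexity of $Y$ simultaneously while keeping every intermediate level set nonempty; here the full continuity of $f$ (rather than mere semicontinuity) is what keeps the attendant maximum functions continuous and the open-partition argument valid.
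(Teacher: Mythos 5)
The paper offers no proof of this lemma: it is Sion's minimax theorem, quoted from the literature (Sion 1958; Komiya 1988; Kindler 2005), so there is no internal argument to compare yours against and it must be judged on its own terms. Your plan is the standard Komiya-style route. The easy inequality is fine, and the two-point case is essentially sound: the superlevel sets $C_y$ are nonempty, compact and convex, quasi-convexity in $y$ gives $C_{y(\lambda)}\subseteq C_{y_1}\cup C_{y_2}$, and the open-partition argument on $[0,1]$ works \emph{provided} $\lambda\mapsto\max_{x\in C_{y_1}}f(x,y(\lambda))$ is upper semicontinuous. That holds if $f$ is jointly continuous, but it does not follow from the separate continuity that the lemma's wording literally asserts; you should either state that you are using joint continuity or run the argument with two levels $c<\beta<w$ as Komiya does, where only semicontinuity is needed.

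The genuine gap is the passage from two points to $m$ points. Your proposed induction --- interpolating segments from $y_m$ into $\mathrm{conv}\{y_1,\dots,y_{m-1}\}$ and repeating the connectedness argument --- does not close: for $y'$ in that convex hull, quasi-convexity only gives $C_{y'}\subseteq\bigcup_{i<m}C_{y_i}$, not $C_{y'}\subseteq\bigcap_{i<m}C_{y_i}$, so along a segment from $y_m$ to $y'$ the sets $C_z$ are no longer covered by \emph{two disjoint} compact sets and the open-partition argument has nothing to bite on. The standard repair is different in kind: fix $\beta$ with $c<\beta<w$, restrict the domain to the compact convex set $X'=\{x\in X: f(x,y_m)\ge\beta\}$, use the already-proved two-point lemma (applied to each pair $(y,y_m)$ at level $\beta$) to verify that $\min_{y\in Y}\max_{x\in X'}f(x,y)\ge\beta>c$, and then apply the induction hypothesis to $f$ restricted to $X'\times Y$ with the points $y_1,\dots,y_{m-1}$. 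Note that this forces an auxiliary level strictly between $c$ and $w$; your single-level formulation cannot support the restriction step. Until the induction is rebuilt along these lines, the finite intersection property, and hence the whole reverse inequality, remains unproved.
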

We follow the description of Sion's theorem in \cite{kind}.

Applying this lemma to the situation of this paper such that Player $n$ may choose $s_n$ and the other players choose $t_i$'s as their strategic variables, we have the following relations.
\begin{align*}
\max_{t_i\in T}\min_{t_n\in T}u_i(t_i, t_n, \mathbf{t}_k)=\min_{t_n\in T}\max_{t_i\in T}u_i(t_i, t_n, \mathbf{t}_k).
\end{align*}
\begin{align*}
\max_{t_i\in T}\min_{s_n\in S}u_i(t_i, t_n(t_i, s_n,\mathbf{t}_k), \mathbf{t}_k)=\min_{s_n\in S}\max_{t_i\in T}u_i(t_i, t_n(t_i, s_n,\mathbf{t}_k), \mathbf{t}_k),
\end{align*}
where $\mathbf{t}_k$ is a vector of $t_k,\ k\neq i, n$, of the players other than Players $i$ and $n$ who choose $t_k$'s as their strategic variables. $u_i(t_i, t_n, \mathbf{t}_k)$ is the payoff of Player $i$ when Players $i$ and $n$ choose $t_i$ and $t_n$. On the other hand, $u_i(t_i, t_n(t_i, s_n, \mathbf{t}_k), \mathbf{t}_k)$ means the payoff of Player $i$ when he chooses $t_i$ and Player $n$ chooses $s_n$.

We show the following results.
\begin{lemma}
\begin{align*}
&\min_{t_n\in T}\max_{t_i\in T}u_i(t_i, t_n, \mathbf{t}_k)=\min_{s_n\in S}\max_{t_i\in T}u_i(t_i, t_n(t_i, s_n, \mathbf{t}_k), \mathbf{t}_k)\\
&=\max_{t_i\in T}\min_{s_n\in S}u_i(t_i, t_n(t_i, s_n, \mathbf{t}_k), \mathbf{t}_k)=\max_{t_i\in T}\min_{t_n\in T}u_i(t_i, t_n,\mathbf{t}_k),
\end{align*}
\label{l3}
\end{lemma}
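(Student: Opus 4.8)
The plan is to name the four expressions, in the order written, as
\[
(\mathrm{A})=\min_{t_n\in T}\max_{t_i\in T}u_i(t_i, t_n, \mathbf{t}_k),\qquad (\mathrm{B})=\min_{s_n\in S}\max_{t_i\in T}u_i(t_i, t_n(t_i, s_n, \mathbf{t}_k), \mathbf{t}_k),
\]
\[
(\mathrm{C})=\max_{t_i\in T}\min_{s_n\in S}u_i(t_i, t_n(t_i, s_n, \mathbf{t}_k), \mathbf{t}_k),\qquad (\mathrm{D})=\max_{t_i\in T}\min_{t_n\in T}u_i(t_i, t_n, \mathbf{t}_k),
\]
and to observe that the two minimax identities already displayed above, both instances of Lemma~\ref{l1}, give $(\mathrm{A})=(\mathrm{D})$ and $(\mathrm{B})=(\mathrm{C})$ at no cost. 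The whole chain therefore collapses to proving a single bridge between the two pairs. I would prove $(\mathrm{C})=(\mathrm{D})$, which is the convenient one: there $t_i$ is frozen by the outer $\max_{t_i}$, so the inner operation moves only Player $n$'s variable, and no coupling between $t_i$ and $t_n$ through the map $t_n(\cdot)$ interferes.

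The core step is a range argument. Fix $t_i$ and $\mathbf{t}_k$, that is, fix $(t_1,\dots,t_{n-1})$, and compare the two value sets
\[
\{\,u_i(t_i, t_n, \mathbf{t}_k):t_n\in T\,\}\quad\text{and}\quad\{\,u_i(t_i, t_n(t_i, s_n, \mathbf{t}_k), \mathbf{t}_k):s_n\in S\,\}.
\]
One inclusion is immediate, since every $s_n\in S$ yields a feasible $t_n(t_i, s_n, \mathbf{t}_k)\in T$ with the same payoff value. For the reverse inclusion, given any $t_n\in T$ I would set $s_n=f_n(t_1,\dots,t_n)\in S$; the definition of $t_n(t_1,\dots,t_{n-1},s_n)$ together with the inversion identity $g_n(f_1(t_1,\dots,t_n),\dots,f_n(t_1,\dots,t_n))=t_n$ then forces $t_n(t_i, s_n, \mathbf{t}_k)=t_n$, so this value is attained on the right as well. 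The two sets thus coincide, whence their minima (attained by continuity of $u_i$ and compactness of $T$ and $S$) are equal:
\[
\min_{t_n\in T}u_i(t_i, t_n, \mathbf{t}_k)=\min_{s_n\in S}u_i(t_i, t_n(t_i, s_n, \mathbf{t}_k), \mathbf{t}_k).
\]
Since this holds for each $t_i\in T$, applying $\max_{t_i\in T}$ to both sides yields $(\mathrm{D})=(\mathrm{C})$.

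Assembling, $(\mathrm{A})=(\mathrm{D})$ and $(\mathrm{B})=(\mathrm{C})$ from the two applications of Lemma~\ref{l1}, combined with the bridge $(\mathrm{C})=(\mathrm{D})$, give $(\mathrm{A})=(\mathrm{D})=(\mathrm{C})=(\mathrm{B})$, i.e.\ the asserted chain. I expect the only genuine obstacle to be the reverse inclusion above, namely the surjectivity claim that, as $s_n$ sweeps $S$ with the other outputs held fixed, the determined $t_n(t_i, s_n, \mathbf{t}_k)$ sweeps all of $T$; this is exactly where the invertibility and onto hypotheses on $f_i$ and $g_i$ are indispensable. It is worth noting that quasi-concavity and quasi-convexity of $u_i$ play no role in this bridge and enter only through the two minimax equalities supplied by Lemma~\ref{l1}.
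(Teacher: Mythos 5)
Your proposal is correct, and it takes a genuinely leaner route than the paper. The paper's proof spends most of its effort establishing the first equality $(\mathrm{A})=(\mathrm{B})$ directly, by a two-sided inequality argument: it constructs a $t_n^0$ from $\bar{t}_i(s_n)=\arg\max_{t_i}u_i(t_i,t_n(t_i,s_n,\mathbf{t}_k),\mathbf{t}_k)$ to get $(\mathrm{A})\geq(\mathrm{B})$, and an $s_n^0$ from $\bar{t}_i(t_n)$ to get $(\mathrm{B})\geq(\mathrm{A})$; this step requires the additional assumptions that these argmax correspondences are single-valued and (via the maximum theorem) continuous. It then closes the chain with exactly your range argument for $(\mathrm{C})=(\mathrm{D})$ and the Sion identity $(\mathrm{B})=(\mathrm{C})$. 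You instead take both displayed Sion identities $(\mathrm{A})=(\mathrm{D})$ and $(\mathrm{B})=(\mathrm{C})$ as given and supply only the easy bridge $(\mathrm{C})=(\mathrm{D})$, which makes the paper's hard direct proof of $(\mathrm{A})=(\mathrm{B})$ logically redundant and dispenses with the single-valuedness and continuity hypotheses it needs. Two small caveats, both shared with the paper rather than specific to you: the substitution $s_n=f_n(t_1,\dots,t_n)$ pins down $t_n(t_i,s_n,\mathbf{t}_k)=t_n$ only because the implicit equation defining $t_n(\cdot)$ is assumed to have a unique solution; and the Sion identity $(\mathrm{B})=(\mathrm{C})$ presupposes that $u_i(t_i,t_n(t_i,s_n,\mathbf{t}_k),\mathbf{t}_k)$ is quasi-concave in $t_i$ and quasi-convex in $s_n$, which the paper's blanket assumption is (somewhat loosely) meant to guarantee. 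Neither point is a gap in your argument relative to the paper's own standard of rigor.
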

\begin{proof}
$\max_{t_i\in T}u_i(t_i,t_n(t_i, s_n, \mathbf{t}_k),\mathbf{t}_k)$ is the maximum of $u_i$ with respect to $t_i$ given $s_n$. Let $\bar{t}_i(s_n)=\arg\max_{t_i\in T}u_i(t_i,t_n(t_i, s_n, \mathbf{t}_k),\mathbf{t}_k)$, and fix the value of $t_n$ at
\begin{equation}
t_n^0=g_n(f_i(\bar{t}_i(s_n), t_n^0, \mathbf{t}_k),\mathbf{f}_k,s_n),\label{tc1}
\end{equation}
where $\mathbf{f}_k$ denotes a vector of the values of $s_k$'s of players who choose $t_k$'s. We have
\begin{align*}
\max_{t_i\in T}u_i(t_i, t_n^0, \mathbf{t}_k)\geq u_i(\bar{t}_i(s_n),t_n^0,\mathbf{t}_k)=\max_{t_i\in T}u_i(t_i,t_n(t_i, s_n, \mathbf{t}_k),\mathbf{t}_k),
\end{align*}
where $\max_{t_i\in T}u_i(t_i, t_n^0,\mathbf{t}_k)$ is the maximum of $u_i$ with respect to $t_i$ given the value of $t_n$ at $t_n^0$. We assume that $\bar{t}_i(s_n)=\arg\max_{t_i\in T}u_i(t_i,t_n(t_i,s_n,\mathbf{t}_k),\mathbf{t}_k)$ is single-valued. By the maximum theorem and continuity of $u_i$, $\bar{t}_i(s_n)$ is continuous, then any value of $t_n^0$ can be realized by appropriately choosing $s_n$ according to (\ref{tc1}). Therefore,
\begin{equation}
\min_{t_n\in T}\max_{t_i\in T}u_i(t_i, t_n,\mathbf{t}_k)\geq \min_{s_n\in S}\max_{t_i\in T}u_i(t_i, t_n(t_i, s_n, \mathbf{t}_k),\mathbf{t}_k).\label{4-11}
\end{equation}

On the other hand, $\max_{t_i\in T}u_i(t_i, t_n,\mathbf{t}_k)$ is the maximum of $u_i$ with respect to $t_i$ given $t_n$. Let $\bar{t}_i(t_n)=\arg\max_{t_i\in T}u_i(t_i, t_n,\mathbf{t}_k)$, and fix the value of $s_n$ at
\begin{equation}
s_n^0=f_n(\bar{t}_i(t_n), t_n,\mathbf{t}_k).\label{sc1}
\end{equation}
We have
\begin{align*}
\max_{t_i\in T}u_i(t_i,t_n(t_i,s_n^0,\mathbf{t}_k),\mathbf{t}_k)\geq u_i(\bar{t}_i(s_n),t_n(t_i,s_n^0,\mathbf{t}_k),\mathbf{t}_k)=\max_{t_i\in T}u_i(t_i, t_n,\mathbf{t}_k),
\end{align*}
where $\max_{t_i\in T}u_i(t_i,t_n(t_i,s_n^0,\mathbf{t}_k),\mathbf{t}_k)$ is the maximum of $u_i$ with respect to $t_i$ given the value of $s_n$ at $s_n^0$. We assume that $\bar{t}_i(t_n)=\arg\max_{t_i\in T}u_i(t_i, t_n,\mathbf{t}_k)$ is single-valued. By the maximum theorem and continuity of $u_i$, $\bar{t}_i(t_n)$ is continuous, then any value of $s_n^0$ can be realized by appropriately choosing $t_n$ according to (\ref{sc1}). Therefore,
\begin{equation}
\min_{s_n\in S}\max_{t_i\in T}u_i(t_i, t_n(t_i,s_n,\mathbf{t}_k),\mathbf{t}_k)\geq \min_{t_n\in T}\max_{t_i\in T}u_i(t_i, t_n,\mathbf{t}_k).\label{4-21}
\end{equation}
Combining (\ref{4-11}) and (\ref{4-21}), we get
\[\min_{s_n\in S}\max_{t_i\in T}u_i(t_i, t_n(t_i,s_n,\mathbf{t}_k),\mathbf{t}_k)=\min_{t_n\in T}\max_{t_i\in T}u_i(t_i, t_n,\mathbf{t}_k).\]
Since any value of $s_n$ can be realized by appropriately choosing $t_n$, we have
\begin{equation*}
\min_{s_n\in S}u_i(t_i,t_n(t_i,s_n,\mathbf{t}_k),\mathbf{t}_k)=\min_{t_n\in T}u_i(t_i, t_n,\mathbf{t}_k).
\end{equation*}
Thus,
\[\max_{t_i\in T}\min_{s_n\in S}u_i(t_i,t_n(t_i,s_n,\mathbf{t}_k),\mathbf{t}_k)=\max_{t_i\in T}\min_{t_n\in T}u_i(t_i, t_n,\mathbf{t}_k).\]
Therefore,
\begin{align*}
&\min_{t_n\in T}\max_{t_i\in T}u_i(t_i, t_n,\mathbf{t}_k)=\min_{s_n\in S}\max_{t_i\in T}u_i(t_i, t_n(t_i, s_n, \mathbf{t}_k),\mathbf{t}_k),\\
=&\max_{t_i\in T}\min_{s_n\in S}u_i(t_i, t_n(t_i,s_n,\mathbf{t}_k),\mathbf{t}_k)=\max_{t_i\in T}\min_{t_n\in T}u_i(t_i, t_n,\mathbf{t}_k).
\end{align*}
%\qed
\end{proof}

\section{The main results}

In this section we present the main result of this paper.
\begin{theorem}
The equilibrium where all players choose $t_i$'s is equivalent to the equilibrium when one player (Player $n$) chooses $s_n$ and all other players choose $t_i$'s as their strategic variables.\label{t1}
\end{theorem}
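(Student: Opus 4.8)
The plan is to show that a profile is a Nash equilibrium of the game in which every player chooses $t_i$ if and only if the corresponding profile, obtained by replacing Player $n$'s component $t_n^*$ with $s_n^*=f_n(t_1^*,\dots,t_n^*)$, is a Nash equilibrium of the game in which Players $1,\dots,n-1$ choose $t_i$ and Player $n$ chooses $s_n$. Since the correspondence between $t_n$ and $s_n$ preserves the realized profile $(t_1,\dots,t_n)$, establishing this equivalence of equilibria immediately yields that the two equilibria coincide as outcomes. I would prove the two directions by checking that no player can profitably deviate after the change of Player $n$'s strategic variable, and, the argument being symmetric, I would write out only one direction in full.

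First I would dispose of Player $n$'s own incentive constraint. Fixing the strategies $t_1,\dots,t_{n-1}$ of the other players, the relation $t_n=g_n(f_1(t_1,\dots,t_n),\dots,f_{n-1}(t_1,\dots,t_n),s_n)$ determines $t_n$ as a continuous bijection of $s_n$ onto $T$, because the $f_i$ and $g_i$ are continuous, invertible, one-to-one and onto. Hence
\begin{align*}
\max_{s_n\in S}u_n(t_1,\dots,t_{n-1},t_n(t_1,\dots,t_{n-1},s_n))=\max_{t_n\in T}u_n(t_1,\dots,t_{n-1},t_n),
\end{align*}
and the two maxima are attained at corresponding points. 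Thus Player $n$'s best response, viewed as a realized outcome, is the same whether the choice variable is $t_n$ or $s_n$, so $t_n^*$ being optimal among $t_n$ is equivalent to $s_n^*$ being optimal among $s_n$.

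The crux is the incentive constraint of a representative symmetric player $i\in\{1,\dots,n-1\}$, because once Player $n$ holds $s_n^*$ fixed, a deviation in $t_i$ moves the realized $t_n=t_n(t_i,s_n^*,\mathbf{t}_k^*)$. I must therefore show that $\max_{t_i\in T}u_i(t_i,t_n(t_i,s_n^*,\mathbf{t}_k^*),\mathbf{t}_k^*)$ is attained at $t_i^*$ and equals the all-$t$ equilibrium value $u_i(t_1^*,\dots,t_n^*)$. This is exactly where Lemma \ref{l3} enters: with $\mathbf{t}_k$ set at the equilibrium values $\mathbf{t}_k^*$, it equates the four quantities $\min_{t_n}\max_{t_i}u_i$, $\min_{s_n}\max_{t_i}u_i$, $\max_{t_i}\min_{s_n}u_i$ and $\max_{t_i}\min_{t_n}u_i$, and, as in its proof, the maximizing $t_i$ and the realized outcomes are preserved across the $t_n$- and $s_n$-formulations via the maximum theorem and single-valuedness of the argmax.

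To close the argument I would connect the Nash equilibrium value to these minimax values, and this is the step I expect to be the main obstacle. Because the game is zero-sum, $u_n=-\sum_{j\neq n}u_j$, so Player $n$'s maximization of $u_n$ is the minimization of $\sum_{j=1}^{n-1}u_j$; by the symmetry of Players $1,\dots,n-1$ this minimization coincides, at the symmetric profile, with minimizing the representative payoff $u_i$. Consequently Player $n$'s equilibrium choice realizes the outer minimizer, giving
\begin{align*}
&u_i(t_1^*,\dots,t_n^*)=\min_{t_n\in T}\max_{t_i\in T}u_i(t_i,t_n,\mathbf{t}_k^*),\\
&\max_{t_i\in T}u_i(t_i,t_n(t_i,s_n^*,\mathbf{t}_k^*),\mathbf{t}_k^*)=\min_{s_n\in S}\max_{t_i\in T}u_i(t_i,t_n(t_i,s_n,\mathbf{t}_k^*),\mathbf{t}_k^*).
\end{align*}
Lemma \ref{l3} equates the two right-hand sides, so the two left-hand sides are equal and both are attained at $t_i^*$; hence Player $i$ cannot profitably deviate in the mixed game. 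Running the same reasoning in reverse gives the converse inclusion, and therefore the two equilibria are equivalent. The delicate point throughout is the identification of Player $n$'s payoff-maximizing move with the minimizing move against the symmetric players, which must lean on the zero-sum identity and the symmetry rather than on any first-order condition, since differentiability is not assumed.
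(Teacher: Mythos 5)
Your overall strategy is the paper's own: establish Player $n$'s indifference between $t_n$ and $s_n$ via the bijection, reduce Player $i$'s incentive constraint to the minimax identities of Lemma \ref{l3}, and bridge the Nash equilibrium to those identities through the zero-sum condition plus symmetry. However, there is one genuine gap. The step you yourself flag as the crux --- converting Player $n$'s maximization of $u_n$ into minimization of the representative payoff $u_i$ --- requires more than the zero-sum identity $u_n=-\sum_{j\neq n}u_j$ and interchangeability of Players $1,\dots,n-1$. You need all of $u_1,\dots,u_{n-1}$ to be \emph{equal} along the slice where only $t_n$ varies, so that $\sum_{j\neq n}u_j=(n-1)u_i$ and hence $\arg\max_{t_n}u_n=\arg\min_{t_n}u_i$. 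That holds only at a profile with $t_1^*=\dots=t_{n-1}^*$. You invoke ``the symmetric profile'' in passing, but your framing (``a profile is a Nash equilibrium if and only if\dots'') is for an arbitrary equilibrium of the all-$t$ game, and for an asymmetric equilibrium profile the identification of Player $n$'s maximizer with the minimizer of a single $u_i$ fails. The paper closes this hole constructively: the first part of its proof builds a symmetric equilibrium $(t^*,\dots,t^*,t_n^*)$ by two nested fixed-point arguments (a fixed point of the symmetric players' common best response given $t_n$, then a fixed point of the composition with Player $n$'s best response), and the rest of the argument is carried out only for that profile. Your proposal needs either this construction or an explicit restriction to equilibria that are symmetric among Players $1,\dots,n-1$; as written, neither the existence nor the symmetry of the equilibrium you manipulate is established.

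Apart from that, your treatment of Player $n$ (realization of any $t_n$ by a suitable $s_n$ and conversely, given the others' strategies) and your use of Lemma \ref{l3} at $\mathbf{t}_k=\mathbf{t}_k^*$ to show that $t_i^*$ remains a best response against $s_n^*$ match the paper's steps; the paper additionally spells out why $s_n^0(t^*)=f_n(t^*,\dots,t^*,t_n^*)$ attains $\min_{s_n}\max_{t_i}u_i$, via the inequality $\max_{t_i}u_i\geq u_i$ evaluated at $t_i=t^*$, which you would need to reproduce to justify your claim that both maxima ``are attained at corresponding points.''
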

\begin{proof}
\begin{enumerate}
	\item Consider a situation $(t_1,\dots,t_{n-1},t_n)=(t,\dots,t, t_n)$. By symmetry for Players 1, 2, $\dots$, $n-1$
\[\max_{t_i\in T}u_i(t,\dots,t_i,\dots,t, t_n)=\max_{t_j\in T}u_j(t,\dots,t_j,\dots,t, t_n),\ \mathrm{for\ any\ }i,j\neq n,\]
and
\[\arg\max_{t_i\in T}u_i(t,\dots,t_i,\dots,t, t_n)=\arg\max_{t_j\in T}u_j(t,\dots,t_j,\dots,t, t_n)\in T,\ \mathrm{for\ any\ }i,j\neq n,\]
given $t_n$.

Let
\[t_n(t)=\arg\max_{t_n\in T}u_n(t,\dots,t, t_n).\]
We assume that it is a single-valued continuous function. 

Consider the following function.
\[t\rightarrow \arg\max_{t_i\in T}u_i(t,\dots,t_i,\dots,t, t_n),\ \mathrm{for\ any\ }i\neq n,\ \mathrm{given}\ t_n.\]
This function is continuous and $T$ is compact. Thus, there exists a fixed point given $t_n$. Denote it by $t^*(t_n)$, then
\[t^*(t_n)=\arg\max_{t_i\in T}u_i(t^*(t_n),\dots,t_i,\dots,t^*(t_n), t_n),\ \mathrm{for\ any\ }i\neq n,\ \mathrm{given}\ t_n.\]
Now we consider the following function.
\[t\rightarrow t^*(t_n(t)).\]
This also has a fixed point. Denote it by $t^*$ and $t_n(t^*)$ by $t_n^*$, then we have
\[t^*=\arg\max_{t_i\in T}u_i(t^*,\dots,t_i,\dots,t^*,t_n^*),\ \mathrm{for\ any\ }i\neq n,\]
\[t_n^*=\arg\max_{t_n\in T}u_n(t^*,\dots,t^*,t_n),\]
\[\max_{t_i\in T}u_i(t^*,\dots,t_i,\dots,t^*,t_n^*)=u_i(t^*, \dots, t^*, t_n^*),\ \mathrm{for\ any\ }i\neq n,\]
and
\[\max_{t_n\in T}u_n(t^*,\dots,t^*,t_n)=u_n(t^*,\dots,t^*,t_n^*).\]
$(t_1, \dots, t_{n-1}, t_n)=(t^*,\dots,t^*,t_n^*)$ is a Nash equilibrium when all players choose $t_i$'s as their strategic variables.

\item Because the game is zero-sum,
\[u_1(t^*, \dots, t^*, t_n)+\dots +u_{n-1}(t^*, \dots, t^*, t_n)+u_n(t^*,\dots, t^*,t_n)=0.\]
By symmetry for Players 1, 2, $\dots$, $n-1$,
\[(n-1)u_i(t^*, \dots, t^*, t_n)+u_n(t^*, \dots, t^*,t_n)=0.\]
This means
\[(n-1)u_i(t^*, \dots, t^*, t_n)=-u_n(t^*, \dots, t^*,t_n),\]
and
\[(n-1)\min_{t_n\in T}u_i(t^*, \dots, t^*, t_n)=-\max_{t_n\in T}u_n(t^*, \dots, t^*,t_n).\]
From this we get
\[\arg\min_{t_n\in T}u_i(t^*, \dots, t^*, t_n)=\arg\max_{t_n\in T}u_n(t^*, \dots, t^*,t_n)=t_n^*,\ \mathrm{for\ any\ }i\neq n.\]
We have
\[\min_{t_n\in T}u_i(t^*, \dots, t^*, t_n)=u_i(t^*, \dots, t^*, t_n^*)=\max_{t_i\in T}u_i(t^*, \dots, t_i, \dots, t^*, t_n^*),\ \mathrm{for\ any\ }i\neq n.\]
Thus,
\begin{align*}
&\min_{t_n\in T}\max_{t_i\in T}u_i(t^*, \dots, t_i, \dots, t^*, t_n)\leq \max_{t_i\in T}u_i(t^*, \dots, t_i, \dots, t^*, t^*_n)=\min_{t_n\in T}u_i(t^*, \dots, t^*, t_n)\\
&\leq \max_{t_i\in T}\min_{t_n\in T}u_i(t^*, \dots, t_i, \dots, t^*, t_n),\ \mathrm{for\ any\ }i\neq n.
\end{align*}
From Lemma \ref{l3} we obtain
\begin{align}
&\min_{t_n\in T}\max_{t_i\in T}u_i(t^*, \dots, t_i, \dots, t^*, t_n)=\max_{t_i\in T}u_i(t^*, \dots, t_i, \dots, t^*, t_n^*)\label{l3-1}\\
&=\min_{t_n\in T}u_i(t^*, \dots, t^*, t_n)=\max_{t_i\in T}\min_{t_n\in T}u_i(t^*, \dots, t_i, \dots, t^*, t_n)\notag\\
&=\min_{s_n\in S}\max_{t_i\in T}u_i(t^*, \dots, t_i, \dots, t^*, t_n(t^*, \dots, t_i, \dots, t^*,s_n))\notag\\
&=\max_{t_i\in T}\min_{s_n\in S}u_i(t^*, \dots, t_i, \dots, t^*, t_n(t^*, \dots, t_i, \dots, t^*,s_n)),\ \mathrm{for\ any\ }i\neq n.\notag
\end{align}

\item Let
\[s^0_n(t^*)=f_n(t^*,\dots, t^*,t_n^*).\]
Since any value of $s_n$ can be realized by appropriately choosing $t_n$,
\begin{equation}
\min_{s_n\in S}u_i(t^*,\dots, t^*,t_n(t^*,\dots,t^*,s_n))=\min_{t_n\in T}u_i(t^*,\dots,t^*,t_n)=u_i(t^*,\dots,t^*,t_n^*).\label{z1}
\end{equation}
Thus,
\[\arg\min_{s_n\in S}u_i(t^*,\dots,t^*,t_n(t^*,\dots,t^*,s_n))=s^0_n(t^*).\]
(\ref{l3-1}) and (\ref{z1}) mean
\begin{align}
&\min_{s_n\in S}\max_{t_i\in T}u_i(t^*,\dots,t_i,\dots,t^*,t_n(t^*,\dots,t_i,\dots,t^*,s_n)) \label{z2} \\
&=\min_{s_n\in S}u_i(t^*,\dots,t^*,t_n(t^*,\dots,t^*,s_n)).\notag
\end{align}
We have
\[\max_{t_i\in T}u_i(t^*,\dots,t_i,\dots,t^*,t_n(t^*,\dots,t_i,\dots,t^*,s_n))\geq u_i(t^*,\dots,t^*,t_n(t^*,\dots,t^*,s_n)).\]
Thus,
\begin{align*}
&\arg\min_{s_n\in S}\max_{t_i\in T}u_i(t^*,\dots,t_i,\dots,t^*,t_n(t^*,\dots,t_i,\dots,t^*,s_n))\\
&=\arg\min_{s_n\in S}u_i(t^*,\dots,t^*,t_n(t^*,\dots,t^*,s_n))=s^0_n(t^*).
\end{align*}
By (\ref{z2})
\begin{align*}
&\min_{s_n\in S}\max_{t_i\in T}u_i(t^*,\dots,t_i,\dots,t^*,t_n(t^*,\dots,t_i,\dots,t^*,s_n))\\
&=\max_{t_i\in T}u_i(t^*,\dots,t_i,\dots,t^*,t_n(t^*,\dots,t_i,\dots,t^*,s_n^0(t^*)))\\
=&\min_{s_n\in S}u_i(t^*,\dots,t^*,t_n(t^*,\dots,t^*,s_n))=u_i(t^*,\dots,t^*,t_n(t^*,\dots,t^*,s^0_n(t^*))).
\end{align*}
Therefore,
\begin{equation}
\arg\max_{t_i\in T}u_i(t^*,\dots,t_i,\dots,t^*,t_n(t^*,\dots,t_i,\dots,t^*,s^0_n(t^*))=t^*.\label{t1-2}
\end{equation}
This holds for any player $i\neq n$.

On the other hand, because any value of $s_n$ is realized by appropriately choosing $t_n$,
\[\max_{s_n\in S}u_n(t^*,\dots,t^*,t_n(t^*,\dots,t^*,s_n))=\max_{t_n\in T}u_n(t^*,\dots,t^*,t_n)=u_n(t^*,\dots,t^*,t_n^*).\]
Therefore,
\begin{equation}
\arg\max_{s_n\in S}u_n(t^*,\dots,t^*,t_n(t^*,\dots,t^*,s_n))=s^0_n(t^*)=f_C(t^*,\dots,t^*,t_n^*).\label{t1-1}
\end{equation}

From (\ref{t1-2}) and (\ref{t1-1}), $(t^*,\dots,t^*,t_n(t^*,\dots,t^*,s^0_n(t^*)))$ is a Nash equilibrium which is equivalent to $(t^*,\dots,t^*,t_n^*)$. %\qed
\end{enumerate}

\end{proof}

Interchanging $t_i$ and $s_i$ for each player, we can show 
\begin{theorem}
The equilibrium where all players choose $s_i$'s is equivalent to the equilibrium when one player (Player $n$) chooses $t_n$ and all other players choose $s_i$'s as their strategic variables.\label{t2}
\end{theorem}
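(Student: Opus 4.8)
The plan is to mirror the proof of Theorem \ref{t1} under the involution that interchanges the two strategic descriptions. The model is completely symmetric in this respect: the maps $(t_1,\dots,t_n)\mapsto(s_1,\dots,s_n)$ and $(s_1,\dots,s_n)\mapsto(t_1,\dots,t_n)$ given by the $f_i$ and $g_i$ are continuous, invertible and mutually inverse, and each $u_i$ is quasi-concave and quasi-convex in the $s$-variables exactly as it is in the $t$-variables. First I would record the analogue of Lemma \ref{l3} obtained by swapping $t$ and $s$ throughout: writing $s_n(s_i,t_n,\mathbf{s}_k)$ for the value of $s_n$ induced when Player $n$ chooses $t_n$ while Players $i$ and $k\neq i,n$ choose $s_i$ and $\mathbf{s}_k$, one has
\begin{align*}
&\min_{s_n\in S}\max_{s_i\in S}u_i(s_i,s_n,\mathbf{s}_k)=\min_{t_n\in T}\max_{s_i\in S}u_i(s_i,s_n(s_i,t_n,\mathbf{s}_k),\mathbf{s}_k)\\
&=\max_{s_i\in S}\min_{t_n\in T}u_i(s_i,s_n(s_i,t_n,\mathbf{s}_k),\mathbf{s}_k)=\max_{s_i\in S}\min_{s_n\in S}u_i(s_i,s_n,\mathbf{s}_k).
\end{align*}
Its proof is the verbatim transcription of the proof of Lemma \ref{l3}: the realizability step (the counterpart of (\ref{tc1}) and (\ref{sc1})) again uses continuity and single-valuedness of the relevant $\arg\max$ maps together with invertibility of $g_n$ and $f_n$, so that any target value of the induced variable can be produced.

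Next I would establish existence of an equilibrium in the all-$s$ game, copying Part 1 of the proof of Theorem \ref{t1}. By symmetry for Players $1,\dots,n-1$ the common diagonal value $s$ is well defined; setting $s_n(s)=\arg\max_{s_n\in S}u_n(s,\dots,s,s_n)$ (assumed single-valued and continuous) and using the two-stage fixed-point construction --- first a fixed point $s^*(s_n)$ of the symmetric best-response map for fixed $s_n$, then a fixed point of $s\mapsto s^*(s_n(s))$ --- yields a profile $(s^*,\dots,s^*,s_n^*)$ that is a Nash equilibrium when all players choose $s_i$'s.

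Then I would run Parts 2 and 3 under the swap. The zero-sum identity together with symmetry gives $(n-1)u_i(s^*,\dots,s^*,s_n)=-u_n(s^*,\dots,s^*,s_n)$, whence $\arg\min_{s_n}u_i=\arg\max_{s_n}u_n=s_n^*$ for each $i\neq n$, and feeding this into the swapped Lemma \ref{l3} produces the chain of min-max equalities that links the all-$s$ equilibrium to the game in which Player $n$ uses $t_n$. Finally, defining $t_n^0=g_n(s^*,\dots,s^*,s_n^*)$ as the $t_n$ realizing $s_n^*$ and arguing exactly as around (\ref{t1-2})--(\ref{t1-1}), I would show $\arg\max_{s_i}u_i=s^*$ for every $i\neq n$ and $\arg\max_{t_n}u_n=t_n^0$, so that the resulting profile is a Nash equilibrium equivalent to $(s^*,\dots,s^*,s_n^*)$.

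The main obstacle is the swapped form of Lemma \ref{l3}: one must make sure the realizability and continuity arguments genuinely survive the interchange, i.e.\ that the induced map $s_n(s_i,t_n,\mathbf{s}_k)$ is well defined and that the relevant $\arg\max$ correspondences are single-valued and continuous so that the maximum theorem applies. Because the roles of the $f_i$/$g_i$ and of the two variable sets are fully interchangeable in the model, this reduces to checking that no asymmetry has been hidden in the definitions; once that is confirmed, the remaining steps are mechanical transcriptions of the proof of Theorem \ref{t1}.
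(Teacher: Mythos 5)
Your proposal is correct and is exactly the paper's approach: the paper proves Theorem \ref{t2} simply by the remark ``Interchanging $t_i$ and $s_i$ for each player, we can show'' the result, i.e.\ by transcribing the proof of Theorem \ref{t1} (and Lemma \ref{l3}) under the swap of the two strategic descriptions. Your write-up merely makes explicit the swapped lemma and the steps that the paper leaves implicit, including the sanity check that the realizability and single-valuedness assumptions survive the interchange.
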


\section{Various examples}\label{ex}

Consider a game of relative profit maximization under oligopoly including four firms with differentiated goods\footnote{%
About relative profit maximization in an oligopoly see \cite{mm}, \cite{ebl2}, \cite{eb2}, \cite{st}, \cite{eb1}, \cite{ebl1} and  \cite{redondo}}. It is a four-players zero-sum game with two strategic variables. The firms are A, B, C and D. The strategic variables are the outputs and the prices of their goods. We consider the following four patterns of competition. 

\begin{enumerate}
\item Pattern 1: All firms determine their outputs. It is a Cournot case.

The inverse demand functions are 
\begin{equation*}
p_A=a-x_A-bx_B-bx_C-bx_D,
\end{equation*}
\begin{equation*}
p_B=a-x_B-bx_A-bx_C-bx_D,
\end{equation*}
\begin{equation*}
p_C=a-x_C-bx_A-bx_B-bx_D,
\end{equation*}
and
\begin{equation*}
p_D=a-x_D-bx_A-bx_B-bx_C,
\end{equation*}
where $0<b<1$. $p_A$, $p_B$, $p_C$ and $p_D$ are the prices of the goods of Firms A, B, C and D, and $x_A$, $x_B$, $x_C$ and $x_D$ are their outputs.

\item Pattern 2: Firms A, B and C determine their outputs, and Firm D determines the price of its good.

From the inverse demand functions, 
\[p_A=(1-b)a+b^2x_C-bx_C+b^2x_B-bx_B+b^2x_A-x_A+bp_D,\]
\[p_B=(1-b)a+b^2x_C-bx_C+b^2x_B-x_B+b^2x_A-bx_A+bp_D,\]
\[p_C=(1-b)a+b^2x_C-x_C+b^2x_B-bx_B+b^2x_A-bx_A+bp_D,\]
\[x_D=a-bx_C-bx_B-bx_A-p_D\]
are derived.

\item Pattern 3: Firms A, B and C determine the prices, and Firm D determines the output.

From the inverse demand functions, 
\[x_A=\frac{(1-b)a+b^2x_D-bx_D+bp_C+bp_B-bp_A-p_A}{(1-b)(2b+1)},\]
\[x_B=\frac{(1-b)a+b^2x_D-bx_D+bp_C-bp_B-p_B+bp_A}{(1-b)(2b+1)},\]
\[x_C=\frac{(1-b)a+b^2x_D-bx_D-bp_C-p_C+bp_B+bp_A}{(1-b)(2b+1)},\]
\[p_D=\frac{(1-b)a+3b^2x_D-2bx_D-x_D+bp_C+bp_B+bp_A}{2b+1}.\]

\item Pattern 4: All firms determine the prices. It is a Bertrand case.

From the inverse demand functions, the direct demand functions are derived as
follows; 
\[x_A=\frac{(1-b)a+bp_D+bp_C+bp_B-2bp_A-p_A}{(1-b)(3b+1)},\]
\[x_B=\frac{(1-b)a+bp_D+bp_C-2bp_B-p_B+bp_A}{(1-b)(3b+1)},\]
\[x_C=\frac{(1-b)a+bp_D-2bp_C-p_C+bp_B+bp_A}{(1-b)(3b+1)},\]
\[x_D=\frac{(1-b)a-2bp_D-p_D+bp_C+bp_B+bp_A}{(1-b)(3b+1)}.\]

\end{enumerate}

The absolute profits of the firms are 
\begin{equation*}
\pi_A=p_Ax_A-c_Ax_A,
\end{equation*}
\begin{equation*}
\pi_B=p_Bx_B-c_Bx_B,
\end{equation*}
\begin{equation*}
\pi_C=p_Cx_C-c_Cx_C,
\end{equation*}
and
\begin{equation*}
\pi_D=p_Dx_D-c_Dx_D.
\end{equation*}
$c_A$, $c_B$, $c_C$ and $c_D$ are the constant marginal costs of Firms A, B, C and D. The relative profits of the firms are 
\begin{equation*}
\varphi_A=\pi_A-\frac{\pi_B+\pi_C+\pi_D}{3},
\end{equation*}
\begin{equation*}
\varphi_B=\pi_B-\frac{\pi_A+\pi_C+\pi_D}{3},
\end{equation*}
\begin{equation*}
\varphi_C=\pi_C-\frac{\pi_A+\pi_B+\pi_D}{3},
\end{equation*}
and
\begin{equation*}
\varphi_D=\pi_D-\frac{\pi_A+\pi_B+\pi_C}{3}.
\end{equation*}

The firms determine the values of their strategic variables to maximize the
relative profits. We see 
\begin{equation*}
\varphi_A+\varphi_B+\varphi_C+\varphi_D=0,
\end{equation*}
so the game is zero-sum. We assume $c_A=c_B=c_C$, that is, the game is symmetric for Firms A, B and C. However, $c_D$ is not equal to $c_A$. Thus, the game is partially asymmetric. Firm D is an alien.

We calculate the equilibrium outputs of the firms in the above four patterns.
\begin{enumerate}
	\item Pattern 1
\[x_A=\frac{bc_D-3c_A-ab+3a}{2(3-b)(b+1)},\]
\[x_B=\frac{bc_D-3c_A-ab+3a}{2(3-b)(b+1)},\]
\[x_C=\frac{bc_D-3c_A-ab+3a}{2(3-b)(b+1)},\]
and
\[x_D=\frac{bc_D-3c_A-ab+3a}{2(3-b)(b+1)}.\]
	\item Pattern 2
\[x_A=\frac{bc_D-3c_A-ab+3a}{2(3-b)(b+1)},\]
\[x_B=\frac{bc_D-3c_A-ab+3a}{2(3-b)(b+1)},\]
\[x_C=\frac{bc_D-3c_A-ab+3a}{2(3-b)(b+1)},\]
and
\[x_D=\frac{bc_D-3c_A-ab+3a}{2(3-b)(b+1)}.\]
	\item Pattern 3
\[x_A=\frac{3b^2c_D+bc_D+4b^2c_A-5bc_A-3c_A-7ab^2+4ab+3a}{2(1-b)(b+1)(7b+3)},\]
\[x_B=\frac{3b^2c_D+bc_D+4b^2c_A-5bc_A-3c_A-7ab^2+4ab+3a}{2(1-b)(b+1)(7b+3)},\]
\[x_C=\frac{3b^2c_D+bc_D+4b^2c_A-5bc_A-3c_A-7ab^2+4ab+3a}{2(1-b)(b+1)(7b+3)},\]
\[x_D=\frac{3a-2b^2c_D-7bc_D-3c_D+9b^2c_A+3bc_A-7ab^2+4ab}{2(1-b)(b+1)(7b+3)}.\]

	\item Pattern 4

\[x_A=\frac{3b^2c_D+bc_D+4b^2c_A-5bc_A-3c_A-7ab^2+4ab+3a}{2(1-b)(b+1)(7b+3)},\]
\[x_B=\frac{3b^2c_D+bc_D+4b^2c_A-5bc_A-3c_A-7ab^2+4ab+3a}{2(1-b)(b+1)(7b+3)},\]
\[x_C=\frac{3b^2c_D+bc_D+4b^2c_A-5bc_A-3c_A-7ab^2+4ab+3a}{2(1-b)(b+1)(7b+3)},\]
\[x_D=\frac{3a-2b^2c_D-7bc_D-3c_D+9b^2c_A+3bc_A-7ab^2+4ab}{2(1-b)(b+1)(7b+3)}.\]
\end{enumerate}
We find that Pattern 1 is equivalent to Pattern 2 (an example of Theorem \ref{t1}), but it is not equivalent to Patterns 3 and 4, and that Pattern 4 is equivalent to Pattern 3 (an example of Theorem \ref{t2}), but it is not equivalent to Patterns 1 and 2.

Next let us examine a case where $c_B=c_A$ and $c_C=c_D$ but $c_A\neq c_D$. Consider the following two patterns of competition.
\begin{enumerate}
	\item Pattern 1: All firms determine their outputs. It is a Cournot case.
	\item Pattern 2: Firms A and B determine their outputs, and Firms C and D determine the prices of their goods, then

\[p_A=\frac{b^2x_B-bx_B+2b^2x_A-bx_A-x_A+bp_D+bp_C-ab+a}{b+1},\]
\[p_B=\frac{2b^2x_B-bx_B-x_B+b^2x_A-bx_A+bp_D+bp_C-ab+a}{b+1},\]
\[x_C=\frac{b^2x_B-bx_B+b^2x_A-bx_A+bp_D-p_C-ab+a}{(1-b)(b+1)},\]
\[x_D=\frac{b^2x_B-bx_B+b^2x_A-bx_A-p_D+bp_C-ab+a}{(1-b)(b+1)}.\]
\end{enumerate}

We calculate the equilibrium outputs of the firms in these two patterns.

\begin{enumerate}
	\item Pattern 1
\[x_A=\frac{2bc_D-bc_A-3c_A-ab+3a}{2(3-b)(b+1)},\]
\[x_B=\frac{2bc_D-bc_A-3c_A-ab+3a}{2(3-b)(b+1)},\]
\[x_C=\frac{3a-bc_D-3c_D+2bc_A-ab}{2(3-b)(b+1)},\]
\[x_D=\frac{3a-bc_D-3c_D+2bc_A-ab}{2(3-b)(b+1)}.\]

	\item Pattern 2

\[x_A=\frac{2bc_D+bc_A-3c_A-3ab+3a}{6(1-b)(b+1)},\]
\[x_B=\frac{2bc_D+bc_A-3c_A-3ab+3a}{6(1-b)(b+1)},\]
\[x_C=\frac{bc_D-3c_D+2bc_A-3ab+3a}{6(1-b)(b+1)},\]
\[x_D=\frac{bc_D-3c_D+2bc_A-3ab+3a}{6(1-b)(b+1)}.\]
\end{enumerate}

Patterns 1 and 2 are not equivalent. Therefore, with more than one aliens the equivalence result does not hold.

\section{Concluding Remarks}

In this paper we have examined equilibria in a partially asymmetric multi-players zero-sum game. We have shown that in an asymmetric zero-sum game with only one alien (a player who has a different payoff function) we get the equivalence result about the choice of strategic variables.

\section*{Acknowledgment}

This work was supported by Japan Society for the Promotion of Science KAKENHI Grant Number 15K03481  and 18K01594.

\end{document}